\theoremstyle{thmstyleone}%
\newtheorem{theorem}{Theorem}
\theoremstyle{thmstyletwo}%
\newtheorem{remark}{Remark}%
\theoremstyle{thmstylethree}%
\newcommand{\RR}{{\mathbb{R}}}
\newcommand{\NN}{{\mathbb{N}}}
\newcommand{\ZZ}{{\mathbb{Z}}}
\newcommand{\qban}{{\mathcal  L}_n}
\newcommand{\lhy}{\mathfrak{L}}
\newcommand{\lhz}{\mathfrak{L}}
\newcommand{\bet}{\mathbf{e}}
\begin{document}

\title[On computing the zeros of a class of Sobolev orthogonal polynomials]{On computing the zeros of a class of Sobolev orthogonal  polynomials}


\author[1]{\fnm{N.} \sur{Mastronardi}}\email{nicola.mastronardi@cnr.it}
\equalcont{The authors contributed equally to this work. }
\author[2]{\fnm{M.} \sur{Van Barel}}\email{marc.vanbarel@kuleuven.be}
\equalcont{The authors contributed equally to this work. }
\author[2]{\fnm{R.} \sur{Vandebril}}\email{raf.vandebril@kuleuven.be}
\equalcont{The authors contributed equally to this work. }
\author[3]{\fnm{P.} \sur{Van Dooren}}\email{paul.vandooren@uclouvain.be}
\equalcont{The authors contributed equally to this work. }

\affil*[1]{\orgdiv{Istituto per le Applicazioni del Calcolo ``M. Picone"}, \orgname{Consiglio Nazionale delle Ricerche}, \orgaddress{\street{via Amendola 122/D}, \city{Bari}, \country{Italy}}}
\affil*[2]{\orgdiv{Department of Computer Science}, \orgname{KU Leuven}, \orgaddress{\street{Celestijnenlaan 200A - Bus 2402
3001}, 
 \city{Leuven}, \country{Belgium}}}
\affil*[3]
{\orgdiv{Department of Mathematical Engineering}, \orgname{Catholic University of Louvain}, \orgaddress{\street{Avenue Georges Lemaitre 4}, \city{Louvain-la-Neuve}, \country{Belgium}}}



\abstract{
A fast and weakly stable method for computing the zeros of a particular class of hypergeometric polynomials is presented. The studied hypergeometric polynomials satisfy a higher order differential equation and generalize Laguerre  polynomials. The theoretical study of the asymptotic distribution of the spectrum of these polynomials is an active research topic. In this article we do not contribute to the theory, but provide a practical method to
contribute to further and better understanding of the asymptotic behavior.
The polynomials under consideration fit into the class of Sobolev orthogonal polynomials,  satisfying a four--term recurrence relation. This allows computing the roots via a generalized eigenvalue problem. After condition enhancing similarity transformations, the problem is transformed into the computation of the eigenvalues of a comrade matrix, which is a symmetric tridiagonal modified by a rank--one matrix.
The eigenvalues are then retrieved by relying on an existing structured rank based fast algorithm.
Numerical    examples are  reported studying the accuracy, stability  and conforming the efficiency 
for various parameter settings of the proposed approach. 
 }

\keywords{Sobolev orthogonal polynomials, zeros of polynomials,   generalized eigenvalue problem, comrade matrices}

\pacs[AMS Classification]{33C20, 65F15, 65F35}

\maketitle

\section{Introduction}
Sobolev orthogonal polynomials have been intensively studied in the last decades as a generalization of classical orthogonal polynomials.  Whereas the orthogonality of classical orthogonal polynomials is typically defined via a measure and polynomial evaluations in the discrete setting, Sobolev orthogonal polynomials also take the derivative or higher order derivatives into account when defining the inner product.
Considering Sobolev orthogonal polynomials, the overview of
Marcell\'an and Xu \cite{MarXu15} provides a recent survey of both
theoretical and application oriented developments and can serve as a
starting point for further study (see also the work of Marcell\'an,
P\'erez, and Pi\~nar \cite{MarPe96}). Van Buggenhout provides an
overview of the connection between Sobolev orthogonal polynomials and
inverse eigenvalue problems, which links to Krylov as well \cite{Bu23}.
With respect to applications, we can refer to the work  of Liu, Yu, Wang and Li  \cite{LiYu19} and of Yi, Wang and Li \cite{YuWa19} who study the use of particular Sobolev bases for solving differential equations. The hypergeometric polynomials considered in this article are a particular type of Sobolev orthogonal polynomials linked to a higher order differential equation; for more details we refer to the work of Zagorodnyuk \cite{Za20} and the references therein. The polynomials also arise in the study of integral transforms, more particularly Riemann Liouville fractional integrals 
\cite[Chapter XIII, equation (5)]{ErMa54}. 


The particular hypergeometric polynomials we consider in this article are defined as 
$$
\lhy_n(x)= {}_2F_2(-n,1;\alpha+1,\kappa+1;x),\quad  \alpha, \kappa > -1,
$$
where
$$
 _2{F}_2 \left( \left[\begin{array}{@{}c@{}}\alpha\\ \beta \end{array}\right],
 \left[\begin{array}{@{}c@{}}\gamma\\ \delta \end{array}\right]; z\right)=
\sum_{i=0}^{\infty} \frac{(\alpha)_i (\beta)_i}{(\gamma)_i(\delta)_i} \frac{z^n}{n!}
$$
is the generalized  hypergeometric function 
and $ (\mu)_i $ is the Pochhammer symbol (shifted factorial) defined by
$(\mu)_i  =\mu(\mu+1) \cdots(\mu+i-1),$ for $ i \in \ZZ_{+}$ and $(\mu)_0=1$.

We consider nonnegative integer values of the parameter $\kappa,$ implying that the
 the  polynomials $\lhy_n(x)$ satisfy the following Sobolev orthogonality conditions ($\delta_{n,m}$ stands for the dirac delta):
$$
\int_{0}^{\infty}
\sum_{i=0}^{\kappa}\left( \frac{\kappa!}{i!}\binom{\kappa}{i} x^i\right)^2 \lhy^{(i)}_n(x)\lhy^{(i)}_m(x) e^{-x} x^{\alpha} dx =
 \frac{\kappa!^2 \Gamma(\alpha +n +1) }{  \binom{n+\alpha}{n}^2 n!}\delta_{m,n}.
$$ 

The hypergeometric polynomials $\lhy_n(x)$ are a  generalization of the Laguerre polynomials \cite{Za20} and satisfy the following $4$--term recurrence relation
\begin{equation}\label{eq:4term}
\hspace{-.2cm}\left\{
\begin{array}{@{}l}
\lhz_{-2}(x)=0,\\
\lhz_{-1}(x)=0,\\
\lhz_{0}(x)=\eta,\\
 x(e_i \lhz_{i}(x)+  f_i \lhz_{i-1}(x))= a_i \lhz_{i+1}(x)
+b_i\lhz_{i}(x)
+c_i\lhz_{i-1}(x)+d_i \lhz_{i-2}(x),
\end{array}
\right.
\end{equation}
with $i \in \NN,  $ $ \eta \in \RR \setminus\{0\}, $   and
\begin{equation}\label{eq:coef0}
\left\{\begin{array}{l}
a_i= - (i + \alpha+ 1)(i + \kappa + 1),\\
b_i=i(2i + \alpha + \kappa+ 1)+(i + \alpha +  1)(i +\kappa+1),\\
c_i=-i(3i + \alpha + \kappa),\\
d_i=(i - 1)i,\\
e_i=i + 1,\\
f_i=-i.\\
\end{array}
\right. 
\end{equation}
\begin{remark} 
Given $\alpha$ and $\kappa$, we observe that, for a given $\eta \neq 0$, the sequence $\{\lhz_{i}(x)\}_{i=0}^{\infty} $ is uniquely defined. The constant polynomial is initialized as $\eta$ and does not influence the roots nor the algorithms proposed for computing the zeros of $\lhz_{n}(x).$ The parameter $\eta$ emerges as a scaling in the recurrence relation \eqref{eq:coef0}. 
\end{remark}

The location of the  zeros of (general) hypergeometric polynomials $\lhz_{n}(x),$ and in particular the study of the asymptotic distribution is of significant interest. We refer to Zagorodnuyk \cite{Za20}, the references below, and the references therein for more information on the topic. There are various approaches that can be used to study the distribution: Srivastava, Zhou, and Wang \cite{Zh12,Sri11} study directly the recursion coefficients;  Zhou, Li, and Xu use the associated integral representation  \cite{Zh23};  one can also exploit the link with other polynomials such as the Bessel functions, which is due to Bracciali and Moreno-Balc{\'a}zar \cite{BraMo15}.
More generally applicable results are by Kuijlaars and Martínez-Finkelshtein who study the asymptotic distribution of the roots of Jacobi polynomials\cite{Ku04}; and Kuijlaars and Van Assche study the behavior of matrices in an 
asymptotic fashion \cite{n729}. The latter article \cite{n729} fits
with the research in this article in the sense that we will provide an
algorithm for efficiently computing the roots of matrices linked to
hypergeometric polynomials. To the best of our knowledge the
asymptotic eigenvalue distribution of this particular type of
polynomials has not been found yet. 
Driver and M{\"o}ller provide intervals in which the real zeros will
lie, and they provide some numerical evidence linking the zeros to
Cassini curves \cite{Dr01}.
Boggs and Duren show that the zeros, for particular ranges of
parameters, lie in a half plane and cluster on a particular loop of a
lemniscate \cite{Bo01}. Duren and Guillou \cite{Du01} refined this study  using computer graphics.



In this article we describe a weakly stable algorithm for
computing the zeros of $\lhz_{n}(x)$. The algorithm has an $\mathcal{O} (n^2)$ computational complexity and uses $\mathcal{O} (n)$ memory.
The eigenvalue problem is solved via a structure preserving and QR based
method. More information on QR algorithms can be found in Watkins'
book \cite{b333}, and structure preserving eigenvalue
algorithms are discussed in the book of Mastronardi, Van Barel, and
Vandebril \cite{b164}. To compute the roots of hypergeometric
polynomials we first transform the generalized eigenvalue problem to
a classical eigenvalue problem, which is then scaled to get a so
called Comrade matrix. Comrade matrices, which are essentially
symmetric tridiagonal plus spike matrices, are well suited for structure
preserving QR iterations. Various algorithms that execute this task
are readily available, see Eidelman, Gemignani, and Gohberg \cite{q763}, Del Corso and Vandebril \cite{VaDe09b}, and  Casulli and Robol \cite{CaRo21}. We will use this last one in this article, because it is theoretically proven to be the most reliable one.

The paper is organized as follows. In Section~\ref{sect:1} it is shown that the zeros of $\lhz_{n}(x)$ can be computed as  
 the eigenvalues of a generalized eigenvalue problem associated with the pencil $ x B_n- A_n,$ where $B_n$ and $A_n$ are both banded. Furthermore we show how to transform the pencil to a classical eigenvalue problem of a comrade matrix, that is a symmetric  tridiagonal matrix modified by a rank--one matrix.
 Section~\ref{sec:comrade} presents the basic principles of the QR based, structure preserving algorithm for computing the eigenvalues. Numerical tests are reported in Section~\ref{sect:NE}, where some rules of thumb for reliable use of the proposed method are derived.  The conclusions are to be found in Section~\ref{sect:C}.

\section{Two matrix eigenvalue problems} \label{sect:1}
In this section we show that the zeros of $\lhz_{n}(x)$ can be computed as the solution of two matrix  eigenvalue problems: a generalized and a classical eigenvalue problem.
If we write the 4-terms recurrence relation \eqref{eq:4term} for computing the polynomials $\lhz_n(x)$ in matrix form, we obtain
$$
(x B_n- A_n)\qban(x) = a_{n-1} \lhz_{n}(x)\bet_n,
$$
which we rewrite as 
$$
 \begin{array}{@{}c@{}}(xB_{n,n+1}-A_{n,n+1})\\ \hphantom{x}\end{array}\left[\begin{array}{@{}c@{}}\qban(x) \\
\hline \lhz_{n}(x)\end{array}\right]=\bf{0},
$$
where
$$
A_n=\left[\begin{array}{@{}c@{\hspace{.1cm}}c@{\hspace{.1cm}}c@{\hspace{.1cm}}c@{\hspace{.1cm}}c@{}}
b_0 & a_0 & & &  \\
c_1 &b_1 & a_1 & &  \\
d_2 &c_2 &\ddots& \ddots &   \\
					          &\ddots &\ddots &b_{n-2} & a_{n-2}  \\
	                         &              &d_{n-1}&c_{n-1} &b_{n-1}  						
\end{array}
\right], \;
B_n=\left[\begin{array}{@{}c@{\hspace{.1cm}}c@{\hspace{.1cm}}c@{\hspace{.1cm}}c@{\hspace{.1cm}}c@{}}
 e_0 & & \\
 f_1 & e_1      &         & \\
     & \ddots & \ddots  & \\
     &        & f_{n-2} &e_{n-2} \\
		 &        &         & f_{n-1} &e_{n-1}
\end{array}
\right], 
$$
and
$$
\qban(x)=
\left[\begin{array}{@{}c@{}}
\lhz_0(x) \\
\lhz_1(x) \\
\lhz_2(x) \\
\vdots \\
\lhz_{n-2}(x) \\
\lhz_{n-1}(x) \\
\end{array}
\right], \;
B_{n,n+1}=[B_n\mid {\bf 0}], \; \mbox{\rm and}\; A_{n,n+1}=[ A_n\mid a_{n-1}\bet_n].$$ 
\\
Therefore, $ x^\star$ is a zero of  $\lhz_{n}(x)$ if and only if $ x^\star$ is a generalized eigenvalue of the pencil
\begin{equation}\label{eq:geneig0}
x B_n - A_n.
\end{equation}
The following theorem states that the zeros of $\lhz_{n}(x)$ are also the eigenvalues of a symmetric tridiagonal matrix plus a rank--one matrix. In practice it is often advised, for reasons of numerical reliability, to solve the generalized eigenvalue problem \eqref{eq:geneig0} directly. In this particular setting, however, we will see that 
the problem is heavily structured and allows us to transform it to a structured classical eigenvalue problem.

\begin{theorem}The matrix $ B_n$ is nonsingular and
\begin{equation}\label{eq:semi}
B_n^{-1}=
D_n^{-1} M_n,\;
D_n=\left[ \begin{array}{ccccc}
1       &        &        &       &       \\
        & 2      &        &       &       \\
        &        & \ddots &       &       \\
				&        &        & n-1   &       \\
				&        &        &       & n
				\end{array} \right],
				\;
M_n=\left[ \begin{array}{ccccc}
1       &        &        &       &       \\
1       & 1      &        &       &       \\
\vdots  & \cdots & \ddots &       &       \\
1		  	& 1      & \cdots & 1     &       \\
1 			& 1      & \cdots & 1     &   1
				\end{array} \right].
				\quad				
\end{equation}
Moreover,
\begin{equation}\label{eq:comrade}
{X}_n=B_n^{-1}A_n =
\left[\begin{array}{@{}c@{\hspace{.1cm}}c@{\hspace{.1cm}}c@{\hspace{.1cm}}c@{\hspace{.1cm}}c@{}}
\hat v_0 & \hat a_0 & & &  \\
\hat v_1 &\hat b_1 &\hat  a_1 & &  \\
\hat v_2 &\hat c_2 &\ddots& \ddots &   \\
\vdots         &       &\ddots &\hat b_{n-2} &\hat  a_{n-2}  \\
\hat v_{n-1}	                         &              &       &\hat c_{n-1} &\hat b_{n-1} 
\end{array}
\right]
\end{equation}
with
$$
\begin{array}{lll}
\hat a_{i-2} &= -\frac{(i-1+\alpha)(i-1+\kappa)}{i-1},& i =2,3,\ldots, n\\
\hat b_{i-1} &=2i-1+\alpha+\kappa,& i =2,3,\ldots, n,\\
\hat c_{i} &= -i,& i =2,3,\ldots, n-1,\\
\hat v_0&= (\alpha+1)(\kappa+1),\\
\hat v_1&=\frac{\alpha \kappa}{2}-1,\\
\hat v_i&=\frac{\alpha \kappa}{i+1}, & i =2,3,\ldots, n-1.\\
\end{array}
$$
\end{theorem}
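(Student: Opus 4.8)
The plan is to proceed in three stages: first invert $B_n$, then identify $X_n=B_n^{-1}A_n$ with a row-rescaled matrix of partial column sums, and finally read off the entries one band at a time. For the first stage, note that since $e_i=i+1$ and $f_i=-i$, the lower bidiagonal matrix $B_n$ carries $k$ in its $k$-th diagonal position and $-(k-1)$ just below it. I would recognise this as the factorisation $B_n=M_n^{-1}D_n$, where $M_n^{-1}$, the inverse of the all-ones lower triangular matrix $M_n$, is the bidiagonal matrix with $1$'s on the main diagonal and $-1$'s on the first subdiagonal: the two nonzero diagonals of $M_n^{-1}D_n$ are indeed $\{k\}$ and $\{-(k-1)\}$. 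Since $M_n$ is unit lower triangular and $D_n=\diag(1,\dots,n)$ is invertible, this gives at once that $B_n$ is nonsingular with $B_n^{-1}=D_n^{-1}M_n$, which is \eqref{eq:semi}. (Equivalently one may verify $B_n\,(D_n^{-1}M_n)=I$ directly, the $(k,l)$ entry of the product collapsing to $[\,l\le k\,]-[\,l\le k-1\,]$.)

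For the second stage, \eqref{eq:semi} gives $X_n=B_n^{-1}A_n=D_n^{-1}(M_nA_n)$, and because $M_n$ is the all-ones lower triangular matrix, $(M_nA_n)_{k,l}=\sum_{j=1}^{k}(A_n)_{j,l}$; hence $X_n$ is just $M_nA_n$ with its $k$-th row scaled by $1/k$. It therefore suffices to partial-sum the columns of the banded matrix $A_n$, whose row $j$ has nonzeros $d_{j-1},c_{j-1},b_{j-1},a_{j-1}$ in columns $j-2,j-1,j,j+1$. Thus an interior column $l$ (with $2\le l\le n-1$) contains exactly $a_{l-2},b_{l-1},c_l,d_{l+1}$ in rows $l-1,l,l+1,l+2$, while the first column contains only $b_0,c_1,d_2$ in rows $1,2,3$.

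The third stage rests on the scalar identity
$$a_{m-1}+b_m+c_{m+1}+d_{m+2}=0,$$
which says that every interior column of $A_n$ sums to zero; I would prove it by inserting the closed forms \eqref{eq:coef0} and checking that, viewed as a polynomial in $\alpha,\kappa,m$, the coefficient of each monomial among $1,\alpha,\kappa,\alpha\kappa,m,m\alpha,m\kappa,m^2$ cancels. This single expansion is the only genuine computation, and thus the main (though entirely routine) obstacle. Granting it, the partial column sums telescope: for $l\ge2$ and $k\ge l+2$ the entry of $M_nA_n$ is the full column sum, hence $0$, so $X_n$ vanishes below the subdiagonal in columns $\ge 2$; the superdiagonal entry at $(k,k+1)$ is $a_{k-1}/k$; the diagonal entry at $(k,k)$ is $(a_{k-2}+b_{k-1})/k=-(c_k+d_{k+1})/k$; the subdiagonal entry at $(k,k-1)$ is $(a_{k-3}+b_{k-2}+c_{k-1})/k=-d_k/k$; and the first column equals $b_0$, $(b_0+c_1)/2$, and $(b_0+c_1+d_2)/k$ for $k\ge3$, where $b_0+c_1+d_2=\alpha\kappa$. (The extreme columns need no separate treatment, the identity being purely algebraic.) Substituting \eqref{eq:coef0} into these finitely many expressions and simplifying then yields precisely $\hat a_{i-2}=-\tfrac{(i-1+\alpha)(i-1+\kappa)}{i-1}$, $\hat b_{i-1}=2i-1+\alpha+\kappa$, $\hat c_i=-i$, $\hat v_0=(\alpha+1)(\kappa+1)$, $\hat v_1=\tfrac{\alpha\kappa}{2}-1$ and $\hat v_i=\tfrac{\alpha\kappa}{i+1}$, i.e.\ \eqref{eq:comrade}.
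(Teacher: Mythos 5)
Your proof is correct and follows essentially the same route as the paper: both reduce to $B_n^{-1}=D_n^{-1}M_n$, so that the rows of $B_n^{-1}$ act as scaled partial-sum operators on the columns of $A_n$, and both rely on the telescoping identity that each full column of $A_n$ sums to zero. The only (cosmetic) difference is that you verify the factorisation $B_n=M_n^{-1}D_n$ directly, whereas the paper invokes the general semiseparable formula for the inverse of a lower bidiagonal matrix and then specialises.
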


\begin{proof}
Consider a nonsingular bidiagonal matrix, with all $\tau$'s and $\upsilon$'s different from zero:
$$
B=\left[\begin{array}{@{}c@{\hspace{.1cm}}c@{\hspace{.1cm}}c@{\hspace{.1cm}}c@{\hspace{.1cm}}c@{}}
 \tau_1 & & \\
 \upsilon_1 & \tau_2      &         & \\
     & \ddots & \ddots  & \\
     &        & \upsilon_{n-2} &\tau_{n-1} \\
		 &        &         & \upsilon_{n-1} &\tau_{n}
\end{array}
\right].
$$
The inverse of this matrix is a so-called semiseparable matrix \cite[Th.~4.5]{b163}. Because the $\upsilon$'s are nonzero, the inverse is the lower triangular part of a rank one matrix. 
We get that
$$
B^{-1} = M^{-1} N,
$$
where $M$ and $N$ are as follows
$$
M=
\left[\begin{array}{@{}c@{\hspace{.1cm}}c@{\hspace{.1cm}}c@{\hspace{.1cm}}c@{\hspace{.1cm}}c@{}}
 \tau_1 & & \\
  & \tau_2      &         & \\
     &        & \ddots  & \\
     &        &  &\tau_{n-1} \\
		 &        &         & &\tau_{n}
\end{array}
\right],
$$
$$
N=
\left[\begin{array}{@{}cccccc@{}} 
1             &   &       &         & \\
(-\frac{\upsilon_1}{\tau_1})                             &1   &  &  & \\
(-\frac{\upsilon_1}{\tau_1})(-\frac{\upsilon_2}{\tau_2}) &(-\frac{\upsilon_2}{\tau_2}) & 1    \\ 
(-\frac{\upsilon_1}{\tau_1})(-\frac{\upsilon_2}{\tau_2})(-\frac{\upsilon_3}{\tau_3}) &(-\frac{\upsilon_2}{\tau_2})(-\frac{\upsilon_3}{\tau_3}) &(-\frac{\upsilon_3}{\tau_3}) & 1  \\
\vdots	   &    &    & \ddots  &\ddots \\
(-\frac{\upsilon_1}{\tau_1})\cdots(-\frac{\upsilon_{n-1}}{\tau_{n-1}}) &(-\frac{\upsilon_2}{\tau_2})\cdots(-\frac{\upsilon_{n-1}}{\tau_{n-1}})&(-\frac{\upsilon_3}{\tau_3})\cdots(-\frac{\upsilon_{n-1}}{\tau_{n-1}}) & \hphantom{..}\cdots\hphantom{..} & (-\frac{\upsilon_{n-1}}{\tau_{n-1}}) & 1  \\ 
\end{array}
\right].
$$
Hence, Equation \eqref{eq:semi} follows taking the correct diagonal and subdiagonal values of $B$ (see \eqref{eq:coef0}) into account. 

For part two of the proof we consider first the matrix $A_n$.
The $ i$th column of $ A_n $,  $ i=2,\ldots, n,$ is given by
$$
\left[\begin{array}{c}
\mathbf{0}_{i-2}\\
a_{i-2} \\
b_{i-1}\\
c_{i}\\
d_{i+1}\\
\mathbf{0}_{n-i-2}
\end{array}
\right]
=
\left[\begin{array}{c}
\mathbf{0}_{i-2}\\
-(i-1+\alpha)(i-1+\kappa)\\ 
(i-1)(2i-1+\alpha+\kappa)+(i+\alpha)(i+\kappa)\\
-i(3i+\alpha+\kappa)\\
i(i+1)\\
\mathbf{0}_{n-i-2}
\end{array}
\right].
$$ 
Hence,
\begin{equation}\label{eq:cond1}
\begin{array}{llll}
\hat a_{i-2} &= B_n^{-1}(i-1,:) A_n(:,i)&=\frac{a_{i-2}}{i-1}&=-\frac{(i-1+\alpha)(i-1+\kappa)}{i-1},\\
\hat b_{i-1} &= B_n^{-1}(i,:) A_n(:,i)&=\frac{a_{i-2}+b_{i-1}}{i}&=2i-1+\alpha+\kappa,\\
\hat c_{i} &= B_n^{-1}(i+1,:) A_n(:,i)&=\frac{a_{i-2}+b_{i-1}+c_{i}}{i+1}&=-i,\\
\hat v_0&=B_n^{-1}(1,:) A_n(:,1)&=b_0 & = (\alpha+1)(\kappa+1),\\
\hat v_1&=B_n^{-1}(2,:) A_n(:,1)& \frac{b_0+c_1}{2}&=\frac{\alpha \kappa}{2}-1,\\
\hat v_i&=B_n^{-1}(i,:) A_n(:,1)& \frac{b_0+c_1+d_2}{i}&=\frac{\alpha \kappa}{i+1}, \;\; i =2,3,\ldots, n-1.\\
\end{array}
\end{equation}
Moreover,
$$
 X(i,j) = 0\;\; \mbox{\rm if} \;\;  i=1,\ldots, n-2, \;\;  i+2 <j\le n 
$$
since $D_n$ and $ M_n $ are lower triangular and $A_n $ is lower Hessenberg.

Finally, for $  j=2, \ldots, n, \;\; j+2 \le i \le 4, $
\begin{eqnarray*}
 X(i,j) & =& B_n^{-1}(i,:) A_n(:,j)
= \frac{1}{i} 
[\underbrace{\begin{array}{ccc}1, & \ldots, &  1\end{array}}_{i},\underbrace{\begin{array}{ccc}0, & \ldots, &  0\end{array}}_{n-i}]
\left[\begin{array}{c}
\mathbf{0}_{j-2}\\
a_{j-2} \\
b_{j-1}\\
c_{j}\\
d_{j+1}\\
\mathbf{0}_{n-j-2}
\end{array}
\right]\\
&=&\frac{a_{j-2}+b_{j-1}+c_{j}+d_{j+1}}{i} = 0 
\end{eqnarray*}
This concludes the proof by stating that $A_n$ satisfies \eqref{eq:comrade}
\end{proof}
The matrix $X_n$ is nothing else than a tridiagonal matrix plus a spike in the first column.
Since $ \hat{a}_i < 0,\; i=0,\ldots,n-2, $ and $ \hat{c}_i < 0,\; i = 2,\ldots, n-1, $ it is always possible to construct a diagonal matrix 
 $ {D}_n=\mbox{\rm diag}(\delta_1,\ldots,\delta_n) $ of order $n $ such that  $ C_n={D}_n {X}_n  {D}_n^{-1}$  has its main tridiagonal part symmetric, which means that $C_n$ without the spike is symmetric, or in other words the super- and subdiagonal are equal. This matrix is called a comrade matrix.
The scaling allows us thus to write $C_n$ as the sum of a symmetric tridiagonal plus rank one matrix $T_n + \mathbf{w}_1\mathbf{e}_1^T$ with $T_n$ symmetric and $\mathbf{w}_1\mathbf{e}_1^T$ the spike: 
\begin{equation}\label{eq:rankone}
C_n={D}_n {X}_n  {D}_n^{-1}=
\left[\begin{array}{@{}c@{\hspace{.05cm}}|c@{\hspace{.1cm}}c@{\hspace{.1cm}}c@{\hspace{.1cm}}c@{}}
\tilde v_0 & \tilde a_0 & & &  \\
\hline
\\[-.35cm] 
\tilde a_0 &\hat b_1 &\tilde  a_1 & &  \\
\tilde v_2 &\tilde a_1 &\ddots& \ddots &   \\
\vdots         &       &\ddots &\hat b_{n-2} &\tilde  a_{n-2}  \\
\tilde v_{n-1}	                         &              &       &\tilde a_{n-2} &\hat b_{n-1}  						
\end{array}
\right] =
T_n + \mathbf{w}_1\mathbf{e}_1^T,
\end{equation}
%
with
$$
T_n =
\left[\begin{array}{@{}c@{\hspace{.1cm}}c@{\hspace{.1cm}}c@{\hspace{.1cm}}c@{\hspace{.1cm}}c@{}}
\tilde v_0 & \tilde a_0 & & &  \\
\tilde a_0 &\hat b_1 &\tilde  a_1 & &  \\
         &\tilde a_1 &\ddots& \ddots &   \\
         &       &\ddots &\hat b_{n-2} &\tilde  a_{n-2}  \\
	                         &              &       &\tilde a_{n-1} &\hat b_{n-1}	
															\end{array}
\right],\;\; 
 \mathbf{w}_1=
\left[
\begin{array}{c}
0\\
\delta_2 \tilde v_1-\tilde a_0\\
\delta_3 \tilde v_2\\
\vdots \\
\delta_n\tilde v_{n-1}\\
\end{array}
\right],\;\;
\mathbf{e}_1=
\left[
\begin{array}{c}
1\\
0\\
\vdots \\
0\\
0
\end{array}
\right].
$$
The coefficients  $\tilde  a_i,\; i=0, \ldots,{n-2}, $ and $\delta_i, i=1,\ldots, n $ can be computed recursively as follows.
Initialize $\delta_1=1$ and $\delta_2=1$, then we can use the following iteration to compute the remaining values for $i=3,\ldots,n$:
%
                \begin{equation}
                \label{eq:cond2}
                \delta_{i}=\delta_{i-1} \sqrt{ \hat a_{i-2}/\hat c_{i-1}}\quad \mbox{and} \quad
                \tilde a_{i-2}=\sqrt{\hat a_{i-2} \hat c_{i-1}}.
                \end{equation}
Taking the values of  $\hat a_{i} $ and $\hat c_{i}$ from \eqref{eq:cond1}, we can rewrite 
\eqref{eq:cond2} as follows, for $i=2,\ldots,n$:
\begin{equation*}
                \delta_{i+1}=\delta_{i}\sqrt{ \frac{i+\alpha}{i}\frac{i+\kappa}{i}}
                \quad\mbox{and}\quad
                \tilde a_{i-1}= \sqrt{ (i+\alpha)(i+\kappa)}.
\end{equation*}

We want to remark that care has to be taken when balancing a matrix
for eigenvalue computations \cite{rv004,q599}. In the numerical
experiments we illustrate that balancing increases the sensitivity of
the problem. 

\section{Computation of the zeros of $\lhz_{n}(x)$}
\label{sec:comrade}
As shown, in Section~\ref{sect:1}, 
the zeros of $\lhz_{n}(x)$  are the eigenvalues of a symmetric tridiagonal matrix $T_n $ plus a rank--one modification $\mathbf{w}_1\mathbf{e}_1^T.$
The eigenvalues of such matrices can be computed in a  fast and backward stable fashion, i.e., with  $\mathcal{O}(n^2)$  computational complexity, where $ n $ is the size of the matrix, by variants of the QR algorithm exploiting the  structure of the involved matrices.
 In particular, to compute the eigenvalues of (\ref{eq:rankone}), we consider the
  algorithm by Casulli and Robol \cite{CaRo21},    a structure preserving variant of the QR algorithm.
	
	The classical QR algorithm was  originally developed by Francis and Kublanovskaya \cite{Fr61,Ku62}, and is  described in detail   
	in Watkins \cite{b333}. Essentially, the QR algorithm computes the Schur decomposition $X=Q U Q^T$, of a given input matrix $X \in \RR^{n \times n}$, where $U$ is upper triangular and $Q$ orthogonal\footnote{For simplicity, we have restricted ourselves to real matrices and orthogonal similarity transformations. All of this can be generalized to the complex setting without loss of generality.}. The eigenvalues are revealed on the main diagonal of the upper triangular matrix $U$. Of course, the algorithm is iterative in nature; it takes thus several iterations before the factorization is --numerically-- achieved. 
	Essentially, one starts with $X^{(0)}:= X$, and at each iteration, called  QR-step, applies an orthogonal similarity transformation  
	to get $X^{(i)}=(Q^{(i-1)})^T X^{(i-1)} Q^{(i-1)}$, $ i=1, 2, \ldots$. 

 Every  QR-step requiring $ \mathcal{O}(n^3)$ floating point operations,   bringing  the matrix $X^{(i)}$ closer and closer to upper triangular form, until we are satisfied and have achieved a certain precision. To lower the computational cost, the matrix $ X $ is first transformed into a similar upper Hessenberg matrix, 
i.e., a matrix with all the entries  equal to zero below the first subdiagonal.

 The QR algorithm then starts iterating on this Hessenberg matrix, preserving the Hessenberg structure throughout all the iterations, and making the elements of the first subdiagonal closer and closer to zero. If applied to a Hessenberg matrix,  each QR--step requires  $\mathcal{O}(n^2)$ floating point operations, 
	and  the QR  algorithm exibits  $\mathcal{O}(n^3)$ computational complexity. 
	
	The QR algorithm  can be similarly implemented considering  lower Hessenberg matrices instead, i.e., matrices with all the entries  equal to zero above the first superdiagonal. In this case, the sequence of matrices generated by the QR algorithm converges to a lower triangular matrix\footnote{This can be achieved easily by working on the transpose as the transposing a matrix does not change its eigenvalues.}.
	
	In case the involved matrix is  (\ref{eq:rankone}), the
  variant of the QR algorithm developed by Casulli and Robol \cite{CaRo21}   exploits the structure of the matrix, requiring  $\mathcal{O}(n)$ floating point operations for each QR-step, and  $\mathcal{O}(n^2)$ overall computational complexity.
	
	Let us shortly describe one QR-step of the latter algorithm. 
The involved matrix  (\ref{eq:rankone})
 has  the form $C_n=T_n+\mathbf{w}_1 \mathbf{e}_1^T$, i.e.,   a symmetric  tridiagonal $T_n$ plus the rank--one matrix $\mathbf{w}_1 \mathbf{e}_1^T$. Therefore,   $C_n$  is already in lower  Hessenberg form.

Let $ C_n^{(0)} = C_n$. Applying one QR-step to $  C_n^{(0)}$, 
we get $$C_n^{(1)}={Q^{(0)}}^T  C_n^{(0)}  Q^{(0)}={Q^{(0)}}^T T_n Q^{(0)}  + ({Q^{(0)}}^T \mathbf{w}_1) \, ({Q^{(0)}}^T \mathbf{e}_1)^T.$$ Clearly, ${Q^{(0)}}^T T_n Q^{(0)}$ is again symmetric, and  $({Q^{(0)}}^T \mathbf{w}_1) \, ({Q^{(0)}}^T \mathbf{e}_1)^T$ again of rank one form. Also the matrix $C_n^{(1)}$ will again, by construction, be in lower Hessenberg form. Moreover, the orthogonal matrix $Q^{(0)}$ is the product of $ n-1$  Givens rotations. Hence, the computation of  $C_n^{(1)} $ requires   $\mathcal{O}(n)$ floating point operations.

  Furthermore, these three properties, which are, symmetry, rank--one, and Hessenberg structure, will be maintained throughout all the iterations. This allows to develop a cheap, $\mathcal{O}(n)$ storage scheme for storing the matrices. As a consequence, a structure preserving QR algorithm will only take  $\mathcal{O}(n^2)$ floating point operations, instead   $\mathcal{O}(n^3),$  for computing all the eigenvalues of $ C_n.$

Figure~\ref{timings} shows the time in seconds for running the
classical eigenvalue solver and the fast one. Starting from size 1000
the fast algorithm clearly wins. The experiments were done in Matlab,
where the fast algorithm is encoded via Mex-files. The figure is
merely to illustrate that the fast algorithm becomes significantly
faster than the classical one at a certain point; we do not claim
anything on the crossover point, this is highly depending on the
programming language/implementations used.
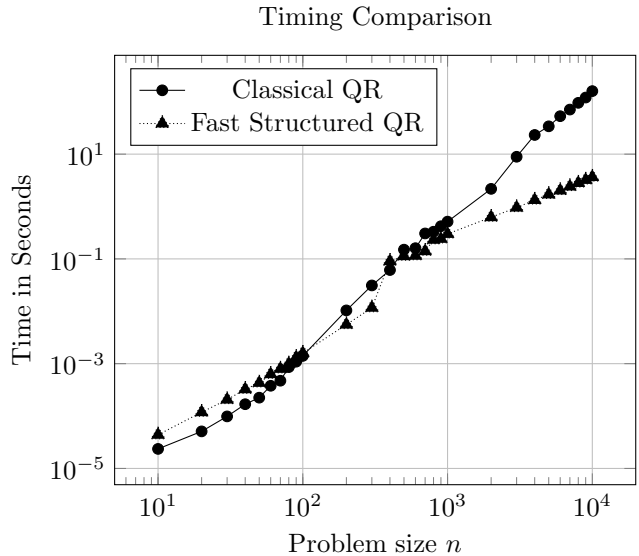
\begin{figure}
\centering
\caption{Comparison of timings in seconds between the classical order
  $n^3$ QR algorithm and the fast $n^2$ structured one.}
\label{timings}
\begin{tikzpicture}
  \begin{loglogaxis}[
    title={Timing Comparison},
        xlabel={Problem size $n$},
        ylabel={Time in Seconds},
        grid=major,
        legend pos=north west,
        every axis plot/.append style={black, thin, mark size=1pt}, 
    ]

    \addplot[black,mark=*,mark size=2pt] table [x index=0, y index=1, col sep=space] {Experiment13/data_exp_timings.txt};
    \addplot[black, mark=triangle*,mark size=3pt,densely dotted] table [x index=0, y index=2, col sep=space] {Experiment13/data_exp_timings.txt};
    \addlegendentry{Classical QR}
    \addlegendentry{Fast Structured QR}

    \end{loglogaxis}
\end{tikzpicture}
\end{figure}


\section{Numerical experiments}\label{sect:NE}
In this section we will compare manners to compute the eigenvalues and compare them with respect to their numerical
stability. We will use the fastest method of the three to examine some properties of the eigenvalues, i.e., the zeros of the hypergeometric polynomials
$${}_2F_2(-n,1;\alpha+1,\kappa+1;x),\quad  \alpha, \kappa \geq -1.$$

\noindent The three algorithms are the following ones:
\begin{itemize}
\item The first algorithm takes the unsymmetrized matrix $X$ of size $n$ for a given value of $\alpha$ and $\kappa$ and solves the corresponding eigenvalue problem using the classical, not structure exploiting QR algorithm. 
For our purpose we have used the Matlab solver \texttt{eig}.

\item The second algorithm uses the symmetrized Comrade matrix $C$ as input and solves the
corresponding eigenvalue problem using the non structured QR algorithm. Again we use the Matlab solver \texttt{eig}.

\item The third algorithm uses the symmetrized matrix $C$ as input and
  relies on a fast structure preserving QR algorithm. We opted to take
  the fast algorithm of Casulli and Robol \cite{CaRo21}.

\end{itemize}
We remark that we did not turn off the balancing option of the Matlab solver \texttt{eig}, which means that a particular balancing strategy is applied in order to enhance the accuracy. The fast algorithm does not allow for additional balancing. We will refer to the three approaches as Algorithms 1,2, and 3 respectively.


\subsection{Stability analysis}

To study the stability of these three algorithms we work in double precision and in multiple precision to construct the matrices and compute the eigenvalues.
Figure~\ref{fig:stab:1} shows the maximum absolute error of the eigenvalues for size $n = 100$ and $\alpha$ and $\kappa$ values taken from $-1$ to $5$ by steps of $0.1$. We assume the eigenvalues computed in multiple precision to be correct and determine the error based on those. In this first test we only considered the non-structured eigenvalue solver on the unsymmetrized problem, thus Algorithm 1.
One can see that this error increases most at the diagonal of the $\alpha$-$\kappa$ plane. Hence we take $\alpha=\kappa$ in the next figures.

\begin{center}
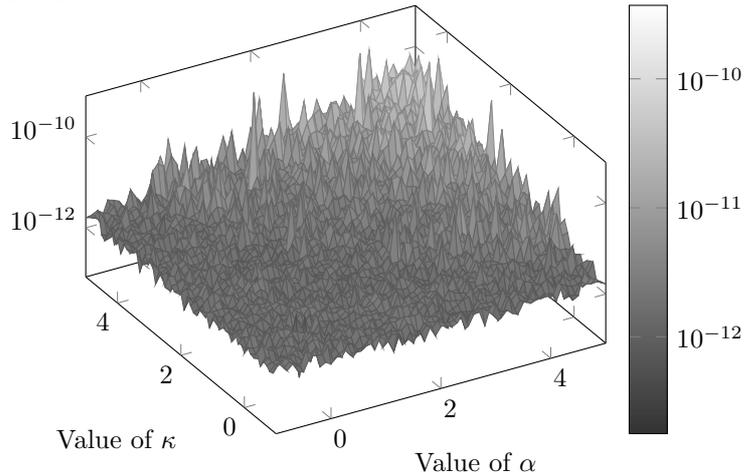
\begin{figure}[hbtp]
    \caption{Maximum absolute error on the eigenvalues for the non-structured QR algorithm on the nonsymmetrized matrix $X$. }
    \label{fig:stab:1}
    \centering
\begin{tikzpicture}
  \begin{axis}[
     view={-30}{45},
     colormap={bw}{gray(0cm)=(0.2);gray(1cm)=(1)},
     xlabel={Value of $\alpha$},
     ylabel={Value of $\kappa$},
     zticklabels={$10^{-14}$, $10^{-12}$, $10^{-10}$},
     colorbar,
     colorbar style={
            ytick={-12, -11, -10}, 
            yticklabels={$10^{-12}$, $10^{-11}$,$10^{-10}$} 
        },
     mesh/ordering=x varies
    ]
    \addplot3[surf] table [col sep=space,row sep=newline] {./Experiment10/data_exp10_1.txt};
 \end{axis}
\end{tikzpicture}
\end{figure}
\end{center}

In Figure~\ref{fig:stab:2}, the four subplots show this error for $\alpha = \kappa$ and for sizes $n= 100, 300, 400$ and $1000$ for the three algorithms. Note that the legend is the same for all figures, but the y-axis scale, depicting the error, varies.
We compare this absolute error with the product of the norm of the matrix $X$, the machine precision $\epsilon$ and the maximum of the condition number of the eigenvalues for $X$, and we do the same for $C$.
The two latter measures are a prediction on the upper bound of the error to be expected.
The condition number of an eigenvalue is $\frac{1}{w^T v}$ where $v$ and $w$
are the right and left eigenvectors of unit norm corresponding to the eigenvalue.

\begin{figure}[h!tb]
\centering
\caption{Comparison of the absolute error of the three algorithms and two additional curves representing upper bounds for both nonsymmetric and symmetric solvers. Note that the legend holds for all subfigures, but the y-axis scale differs significantly.}
\label{fig:stab:2}
  \begin{tikzpicture}[scale=0.7]
     \begin{semilogyaxis}[
    title={Errors for size $n=100$},
        xlabel={$\alpha=\kappa$},
        ylabel={Error},
        grid=major,
        legend pos=north west,
       every axis plot/.append style={mark size=1.7pt}, 
       width=9cm,
       height=9cm
    ]

\addplot[black,densely dotted, mark=*] table [x index=0, y index=1, col sep=space] {Experiment11/data_exp16_1.txt};
\addplot[black, densely dashed, mark=diamond*] table [x index=0, y index=2, col sep=space] {Experiment11/data_exp16_1.txt};
\addplot[black,solid, mark=square*] table [x index=0, y index=3, col sep=space] {Experiment11/data_exp16_1.txt};
\addplot[black, dash dot, mark=triangle*] table [x index=0, y index=4, col sep=space] {Experiment11/data_exp16_1.txt};
\addplot[black, solid,mark=x] table [x index=0, y index=5, col sep=space] {Experiment11/data_exp16_1.txt};

\addlegendentry{Fast Alg.\  on$C$}
\addlegendentry{\texttt{eig}($C$)}
\addlegendentry{Bound Sym.\ $C$}
\addlegendentry{\texttt{eig}($X$)}
\addlegendentry{Bound Unsym.\ $X$}

\end{semilogyaxis}
\end{tikzpicture}
  \begin{tikzpicture}[scale=0.7]
     \begin{semilogyaxis}[
    title={Errors for size $n=300$},
        xlabel={$\alpha=\kappa$},
        ylabel={Error},
        grid=major,
        legend pos=north west,
       every axis plot/.append style={mark size=1.7pt}, 
       width=9cm,
       height=9cm
    ]

\addplot[black, densely dotted, mark=*] table [x index=0, y index=1, col sep=space] {Experiment11/data_exp16_3.txt};
\addplot[black, densely dashed, mark=diamond*] table [x index=0, y index=2, col sep=space] {Experiment11/data_exp16_3.txt};
\addplot[black, solid, mark=square*] table [x index=0, y index=3, col sep=space] {Experiment11/data_exp16_3.txt};
\addplot[black, dash dot, mark=triangle*] table [x index=0, y index=4, col sep=space] {Experiment11/data_exp16_3.txt};
\addplot[black, solid,mark=x] table [x index=0, y index=5, col sep=space] {Experiment11/data_exp16_3.txt};


\end{semilogyaxis}
\end{tikzpicture}

  \begin{tikzpicture}[scale=.7]
     \begin{semilogyaxis}[
    title={Errors for size $n=400$},
        xlabel={$\alpha=\kappa$},
        ylabel={Error},
        grid=major,
        legend pos=north west,
       every axis plot/.append style={mark size=1.7pt}, 
       width=9cm,
       height=9cm
    ]

\addplot[black, densely dotted, mark=*] table [x index=0, y index=1, col sep=space] {Experiment11/data_exp16_4.txt};
\addplot[black, densely dashed, mark=diamond*] table [x index=0, y index=2, col sep=space] {Experiment11/data_exp16_4.txt};
\addplot[black, solid, mark=square*] table [x index=0, y index=3, col sep=space] {Experiment11/data_exp16_4.txt};
\addplot[black, dash dot, mark=triangle*] table [x index=0, y index=4, col sep=space] {Experiment11/data_exp16_4.txt};
\addplot[black, solid,mark=x] table [x index=0, y index=5, col sep=space] {Experiment11/data_exp16_4.txt};


\end{semilogyaxis}
\end{tikzpicture}
  \begin{tikzpicture}[scale=.7]
     \begin{semilogyaxis}[
    title={Errors for size $n=1000$},
        xlabel={$\alpha=\kappa$},
        ylabel={Error},
        grid=major,
        legend pos=north west,
       every axis plot/.append style={mark size=1.7pt}, 
       width=9cm,
       height=9cm
    ]

\addplot[black, densely dotted, mark=*] table [x index=0, y index=1, col sep=space] {Experiment11/data_exp16_10.txt};
\addplot[black, densely dashed, mark=diamond*] table [x index=0, y index=2, col sep=space] {Experiment11/data_exp16_10.txt};
\addplot[black, solid, mark=square*] table [x index=0, y index=3, col sep=space] {Experiment11/data_exp16_10.txt};
\addplot[black, dash dot, mark=triangle*] table [x index=0, y index=4, col sep=space] {Experiment11/data_exp16_10.txt};
\addplot[black, solid,mark=x] table [x index=0, y index=5, col sep=space] {Experiment11/data_exp16_10.txt};


\end{semilogyaxis}
\end{tikzpicture}
\end{figure}
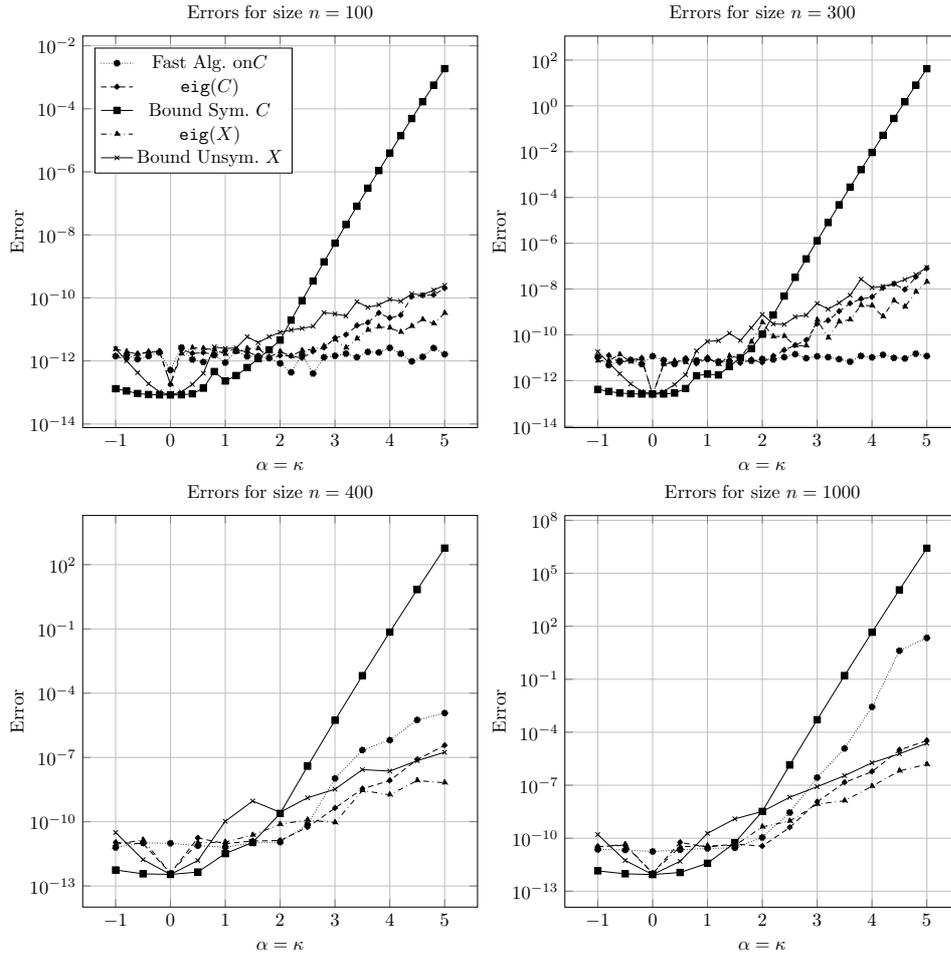

Algorithm 1 and 2 that use \texttt{eig} of Matlab use balancing by default.
Their behavior is similar to the error measure for the unsymmetrized matrix because this is
already a well-balanced matrix with respect to the eigenvalue problem.
Because the fast algorithm is not using balancing, one could expect a behavior following 
the error measure for the symmetrized matrix but it behaves much better when $n < 400$.
For higher values of $n$ it starts to follow the error measure for the symmetrized matrix,
which is very ill-balanced with respect to the eigenvalue problem.
One can conclude that the three algorithms are weakly stable for their
corresponding problem.

Summarizing we get the following result.
For $\alpha = \kappa$ between $-1$ and $2.5$ the three methods are comparable
with respect to accuracy but if $n > 1000$ algorithm 3 becomes faster than the other two when $n$ grows.
For $\alpha = \kappa$ between $2.5$ and $5$ and $n < 400$ the fast algorithm  is preferable
because it gives the highest accuracy.
However, if $n >= 400$ the other two algorithms are the best where \texttt{eig}
applied to $X$ is slightly better compared to $C$.

\subsection{Range of stability for the fast algorithm}

For the fast algorithm, one can expect visible changes in the plot of the spectrum
for values of $\alpha = \kappa$ larger than $4$, due to the
difficulties of solving the problem
For $\alpha = \kappa = 4.4$ and $n = 1000$ the eigenvalues computed in double and in
multiple precision are plotted Figure~\ref{spectrum}.
Note that for double precision, due to the ill-conditioning of the problem, 4 
eigenvalues are moved away from the circular region of the eigenvalues.

\begin{figure}[htb]
\centering
\caption{High precision and double precision plot of part of the
  spectrum, to illustrate the ill-conditioning. The
  eigenvalues computed in double precision deviate from the correct ones.}
\label{spectrum}
\begin{tikzpicture}[scale=.8]
    \begin{axis}[scale=.8,
        grid=major, 
        legend pos=north east, 
        xmin=-10,xmax=100,
        ymin=-5,ymax=5,
        width=12cm,
        height=12cm,
        title={Plot of the spectrum for $\alpha=\kappa=4.4$}
    ]

    \addplot[
        only marks, 
        mark=*, 
        mark size=1pt, 
        color=black
    ] table [x index=0, y index=1, col sep=space] {Experiment9/data_exp92.txt};
    \addlegendentry{Multiple Precision}

    \addplot[
        only marks, 
        mark=o, 
        mark size=2.5pt, 
        thick, 
        color=black
    ] table [x index=0, y index=1, col sep=space] {Experiment9/data_exp91.txt};
    \addlegendentry{Double Precision}

    \end{axis}
  \end{tikzpicture}
  \end{figure}

We can conclude that the fast algorithm $3$ can be used to study the behaviour of the eigenvalues depending on $n$, $\alpha$ and $\kappa$ for values of $\alpha$ and $\kappa$ smaller than $4$ or $5$.

\subsection{Separation between complex and real spectrum}
In Figure~\ref{separation} the division lines between the regions in the 
$\alpha-\kappa$-plane are indicated where all the eigenvalues are real and where
there are some of the eigenvalues that are non-real. We show this division line
when the size of the matrix is $10$, $100$ or $1000$.
For $\alpha$ or $\kappa$ equal to $-1$ or $0$ all eigenvalues are real.
In the bottom left region all eigenvalues are real while in the other 3 regions
there are non-real eigenvalues. We remark that the curve for $n=10000$
could only be computed by using the fast algorithm.

\begin{figure}[htb]
\centering
\caption{Separation lines in the $\alpha$ $\kappa$-plane to
  distinguish two regions. Below left the eigenvalues are real,
  crossing the line results in complex eigenvalues.}
\label{separation}
\begin{tikzpicture}[scale=.8]
  \begin{axis}[
    scale=.9,
        grid=major,
        xlabel={Value of $\alpha$},
        ylabel={Value of $\kappa$},
        title={Separation lines between real and complex roots},
        restrict y to domain=-1:100,
        restrict x to domain=-1:100,
        ymax=5,ymin=-1,
        xmax=5,xmin=-1,
        axis equal,
        width=10cm,
        height=10cm,
        xtick={-1,0,1,2,3,4,5},
        ytick={-1,0,1,2,3,4,5},
    ]
  
    \addplot[mark=none,dotted,very thick] table [col sep=tab, x index=0, y index=1] {./Experiment12/curve2.txt};
    \addplot[mark=none,dashed,very thick] table [col sep=tab, x index=0, y index=1] {./Experiment12/curve3.txt};
    \addplot[mark=none,very thick,dashdotted] table [col sep=tab, x index=0, y index=1] {./Experiment12/curve1.txt};
    \addplot[mark=none,very thick] table [col sep=tab, x index=0, y index=1] {./Experiment12/curve4.txt};
    
    \addlegendentry{$n=10$}
    \addlegendentry{$n=100$}
    \addlegendentry{$n=1000$}
    \addlegendentry{$n=10.000$}
    \end{axis}
  \end{tikzpicture}
\end{figure}

\subsection{Growth of the spectrum}

Figure~\ref{max:eig} shows the maximum of the real part of the eigenvalues which is also the maximum real eigenvalue for sizes 
$n = 10:10:100; 200:100:1000; 2000:1000:10000$.
The different lines correspond to different values for $\alpha$ and $\kappa$.
The parameter $\alpha$ is taken from $-1:1:5$ as well as the parameter $\kappa$.
For small values of the size of the matrix $n$ there is some variation
depending on the values of $\alpha$ and $\kappa$ while this becomes smaller for
increasing values of $n$.

\begin{figure}[htb]
\centering
\caption{Maximum real part of the eigenvalues.}
\label{max:eig}

\begin{tikzpicture}[scale=.9]
  \begin{loglogaxis}[scale=1,
    title={The maximum real eigenvalue depends almost linearly on $n$},
        xlabel={Problem size $n$},
        ylabel={Maximum Real Eigenvalue},
        grid=major,
        legend pos=north west,
        every axis plot/.append style={black, thin, mark=* , mark size=1pt}, 
    ]

    \foreach \i in {1,...,50} {
        \addplot[black] table [x index=0, y index=\i, col sep=space] {Experiment2/data_exp2.txt};
    }

    \end{loglogaxis}
\end{tikzpicture}
\end{figure}


Fig~\ref{fig:max:im} we show the maximum of the absolute value of the imaginary part of the eigenvalues for the size of the matrix $n = 1000$ and for $\alpha$ and $\kappa$ values taken from $-1:0.1:+4$.

\begin{figure}[htb]
\centering
\caption{Maximum absolute value of imaginary part of the eigenvalues.}
\label{fig:max:im}

\begin{center}

\begin{tikzpicture}[scale=.9]
  \begin{axis}[scale=1,
     view={-30}{45},
     colormap={bw}{gray(0cm)=(0.2);gray(1cm)=(1)},
     xlabel={Value of $\alpha$},
     ylabel={Value of $\kappa$},
     title={Maximum absolute value of the imaginary spectrum part},
     colorbar,
     mesh/ordering=x varies
    ]
    \addplot3[surf] table [col sep=space,row sep=newline] {./Experiment3/data_exp31.txt};
 \end{axis}
\end{tikzpicture}

\end{center}
\end{figure}
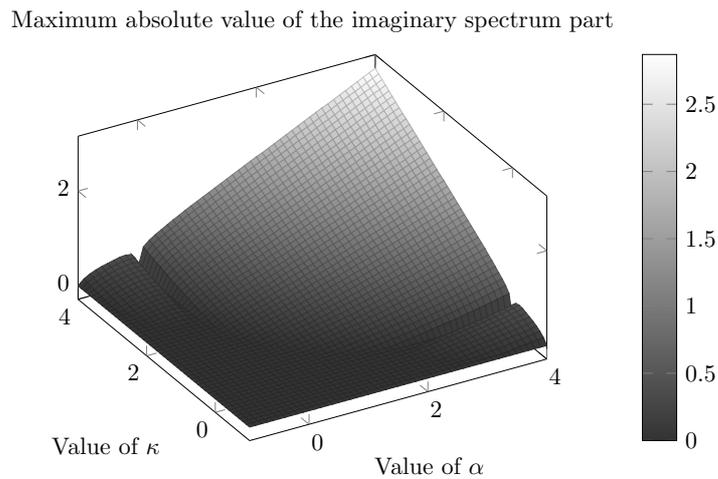

\pagebreak
Figure~\ref{min:eig} shows the minimum of the real part of all the eigenvalues where
the size of the matrix $n$ is taken from $10:10:100; 200:100:1000; 2000:1000:10000$
while $\alpha$ and $\kappa$ are taken from $0:1:3$ where we have excluded the value
$\alpha$ or $\kappa$ equal to $-1$. In this case the minimum of the real part of the
eigenvalues is $0$ corresponding to the eigenvalue $0$.

\begin{figure}[htb]
\centering
\caption{The minimum real part of the spectrum.}
\label{min:eig}
\begin{center}
  \begin{tikzpicture}[scale=.9]
    \begin{loglogaxis}[
      scale=1,
    title={The minimum real eigenvalue depends almost linearly on $n$},
        xlabel={Problem size $n$},
        ylabel={Minimum value},
        grid=major,
        legend pos=north west,
        every axis plot/.append style={black, thin, mark=* , mark size=1.5pt}, 
    ]

    \foreach \i in {1,...,16} {
        \addplot[black] table [x index=0, y index=\i, col sep=space] {Experiment4/data_exp4.txt};
    }

    \end{loglogaxis}
\end{tikzpicture}
\end{center}
\end{figure}
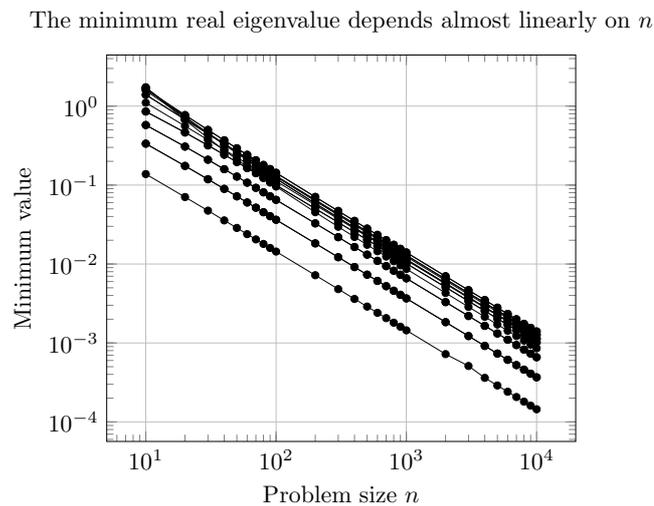




\section{Conclusions}\label{sect:C}

In this article a novel fast method was proposed to analyze and
compute the spectrum of a particular class of hypergeometric
polynomials. Numerical experiments illustrated the viability of the
approach for particular ranges of $\alpha$ and $\kappa$ and some
numerical analysis of the spectrum of the polynomials was presented.


\section*{Acknowledgements}

Nicola Mastronardi is member of the Gruppo Nazionale Calcolo Scientifico-Istituto Nazionale di Alta Matematica (GNCS-INdAM).
The work of Nicola Mastronardi  was partly supported by MIUR,
PROGETTO DI RICERCA DI
RILEVANTE INTERESSE NAZIONALE (PRIN)
20227PCCKZ
"Low--rank Structures and Numerical Methods in Matrix and Tensor
Computations and their Application",
Universit\`a degli Studi di BOLOGNA CUP
J53D23003620006. The work of Marc Van Barel and Raf Vandebril  was partially supported by the Research Council KU Leuven (Belgium), project 
C16/21/002 (Manifactor: Factor Analysis for Maps into Manifolds) and by the Fund for Scientific Research -- Flanders (Belgium), projects G0A9923N (Low rank tensor approximation techniques for up- and downdating of massive online time series clustering) and G0B0123N (Short recurrence relations for rational Krylov and orthogonal rational functions inspired by modified moments).

The authors also wish to thank Prof.\ Zagorodnyuk, for swiftly
responding and pointing to some references of interest for this article.

%



\bibliographystyle{sn-nature}
\bibliography{shortstrings,hyper}

\end{document}